\theoremstyle{plain}
\newcommand{\im}{\operatorname{im}}
\newtheorem{thm}{Theorem}[section]
\newtheorem{lm}[thm]{Lemma}
\newtheorem{prop}[thm]{Proposition}
\theoremstyle{definition}
\newtheorem{re}[thm]{Remark}
\newtheorem{ex}[thm]{Example}
\newcommand{\ZZ}{{\mathbb Z}}
\newcommand{\RR}{{\mathbb R}}
\newcommand{\CC}{{\mathbb C}}
\newcommand{\PP}{{\mathbb P}}
\newcommand{\la}{\langle}
\newcommand{\ra}{\rangle}
\newcommand{\Wedge}{\bigwedge\nolimits}
\newcommand{\liea}[1]{\mathfrak{#1}}
\newcommand{\lieg}[1]{\mathrm{#1}}
\def\sgn{{\rm sgn}\,}
\newcommand{\Hom}{\mathrm{Hom}}
\begin{document}

\title{Singular lines of trilinear forms}
\author[J.~Draisma]{Jan Draisma}
\address[Jan Draisma]{
Department of Mathematics and Computer Science\\
Technische Universiteit Eindhoven\\
P.O. Box 513, 5600 MB Eindhoven, The Netherlands\\
and Centrum voor Wiskunde en Informatica, Amsterdam,
The Netherlands}
\thanks{Draisma was partially supported by the MSRI programme on
tropical geometry and by DIAMANT, an NWO mathematics cluster.}

\email{j.draisma@tue.nl}

\author[R.~Shaw]{Ron Shaw}
\address[Ron Shaw]{
Centre for Mathematics\\
University of Hull\\
Cottingham Road, Hull HU6 7RX, United Kingdom}

\email{r.shaw@hull.ac.uk}

\begin{abstract}
We prove that an alternating $e$-form on a vector space over a
quasi-algebraically closed field always has a singular $(e-1)$-dimensional
subspace, provided that the dimension of the space is strictly greater
than $e$. Here an $(e-1)$-dimensional subspace is called singular if
pairing it with the $e$-form yields zero.  By the theorem of Chevalley
and Warning our result applies in particular to finite base fields.
Our proof is most interesting in the case where $e=3$ and the space has
odd dimension $n$; then it involves a beautiful equivariant map from
alternating trilinear forms to polynomials of degree $\frac{n-1}{2}-1$.
We also give a sharp upper bound on the dimension of subspaces all
of whose $2$-dimensional subspaces are singular for a non-degenerate
trilinear form. In certain binomial dimensions the trilinear forms
attaining this upper bound turn out to form a single orbit under the
general linear group, and we classify their singular lines.
\end{abstract}
\maketitle

\section{Introduction and the main theorem} \label{sec:Intro}

While alternating bilinear forms on an $n$-dimensional vector space
$V$ are very well understood in terms of their ranks and orbits---the
forms of rank at most $2k$ form a Zariski-closed set in which those of
rank exactly $2k$ form a single orbit for each $k=0,\ldots,\lfloor n/2
\rfloor$---{\em trilinear} and higher alternating multilinear forms on $V$
are much harder to grasp. For instance, being of rank at most $k$, that
is, being expressible as the sum of at most $k$ decomposable alternating
forms, is no longer necessarily a closed condition. Even the generic rank
of trilinear forms is not known exactly, although tight asymptotic results
have recently been obtained \cite{Abo09}. As for orbits, trilinear forms
have been classified on spaces of dimension up to seven over arbitrary
fields \cite{Cohen88,Schouten31}, as well as in dimensions $8$ over the
complex or real numbers \cite{Djokovic83,Gurevich35}. In dimension $8$
there are 23 orbits over the complex numbers, and the Hasse diagram
of their orbit closures is known explicitly \cite{Djokovic83b}. For
trilinear forms on $\CC^9$ the number of orbits is infinite,
but the invariant ring of the action of $\lieg{SL}_9$ on them is well
understood---in particular, it is free---and this contributes to the classification in \cite{Vinberg88}. Beyond that,
there seems little hope of a full classification.

This paper settles a question, put forward as conjecture A in
\cite{Shaw08}, about the geometry of trilinear forms in arbitrary
dimension. To state our main result we introduce some notation and
terminology. Write $\la.,.\ra:V \times V^* \to K$ for the natural pairing
of $V$ with its dual $V^*$ to the ground field $K$, and $\Wedge^d V$
for $d$-th exterior power of $V$. Recall that for $e \geq d$ there is a
natural bilinear map $\Wedge^d V \times \Wedge^e (V^*) \to \Wedge^{e-d}
(V^*)$ determined by
\[ (v_1 \wedge \cdots \wedge v_d,y_1 \wedge \cdots \wedge y_e) 
\mapsto \sum_{\pi:[d] \to [e]} \sgn(\pi) \left( \prod_{i=1}^d \la
v_i,y_{\pi(i)} \ra \right)  \Wedge_{j \not \in \im(\pi)} y_j. \]
Here the sum is taken over all injections $\pi:[d]:=\{1,\ldots,d\} \to
[e]$, of which the sign $\sgn \pi$ is defined to be the sign of the unique
extension of $\pi$ to a permutation $\pi':[e] \to [e]$ that is strictly
increasing on $\{d+1,\ldots,e\}$. Moreover, the last wedge is taking in
order of increasing index $j$. For $d=e=1$ this pairing reduces to $\la
.,. \ra$, and we will use the latter notation for general $d \leq e$,
as well. Whenever $d=e$ the pairing $\la .,. \ra$ is a non-degenerate
$K$-valued pairing, by which we identify $(\Wedge^e V)^*$ with $\Wedge^e
(V^*)$. Elements of either of these spaces, or of the space of alternating
multilinear forms $V^e \to K$, are called {\em alternating $e$-forms
on $V$}.

Let $\omega$ be an alternating $e$-form. An element $\lambda \in
\Wedge^d V$ with $d \leq e$ is called {\em singular for $\omega$}
or {\em $\omega$-singular} if $\la \lambda, \omega \ra=0 \in
\Wedge^{e-d}V^*$. Similarly, a $d$-dimensional subspace $U$ of $V$
is called singular for $\omega$ if $\la \Wedge^d U,\omega \ra=\{0\}$,
that is, if the one-dimensional subspace $\Wedge^d U$ of $\Wedge^d V$
is spanned by an $\omega$-singular element. More generally, suppose that
$d,e,f$ are natural numbers with $f \leq e$. Then a $d$-dimensional
subspace $U$ of $V$ is called {\em $f$-singular for $\omega$} if $\Wedge^f
U$ consists entirely of $\omega$-singular elements, or, equivalently,
if every $f$-dimensional subspace of $U$ is $\omega$-singular. 
For $d<f$ this is automatically true, and for $f=d$ this reduces to the 
statement that $U$ is $\omega$-singular.

For instance, a vector $v \in V$ is singular for an alternating
bilinear form $\omega$ if and only if $\omega(v,w)=0$ for all $w$,
that is, if and only if $v$ lies in the radical of $\omega$. Similarly,
a two-dimensional subspace $U$ of $V$ is singular for a trilinear form
$\omega$ if and only if $\omega(u,u',v)=0$ for all $u,u' \in U$. In
projective terminology, as in \cite{Shaw08}, such $U$ are called {\em
singular lines}. We will use both projective terminology (point, line)
and vector space terminology (one-dimensional subspace, two-dimensional
subspace). 

Notice that there is some asymmetry in these notions, which we could
have avoided by allowing that $e < d$ and by calling a the {\em pair}
$\lambda \in \Wedge^d V,\ \omega \in \Wedge^e V^*$ singular. However,
in this paper we will be primarily interested in questions of the
following flavour: fixing an alternating $e$-form $\omega$, what can we
say about the $d$-singular subspaces of $V$ for some $d \leq e$? This
justifies the present notions.

\begin{thm}[Main theorem]
Let $K$ be a quasi-algebraically closed field, that is, every non-constant
homogeneous multivariate polynomial of degree less than the number of
its variables has a non-zero $K$-valued root. Let $e$ be an integer
with $e \geq 3$, and let $V$ be a vector space over $K$ of dimension
at least $e+1$. Then every alternating $e$-form on $V$ has a singular
$(e-1)$-dimensional space.
\end{thm}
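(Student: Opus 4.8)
The plan is to induct on $e$, reducing the general statement to the case $e=3$, and to treat $e=3$ via a $\mathrm{GL}(V)$-equivariant construction that attaches to $\omega$ a single polynomial to which the defining property of $K$ applies. For the base case $\dim V=e+1$, a choice of nonzero $\nu\in\Wedge^{e+1}V^*$ identifies $\Wedge^eV^*$ with $V$, writing $\omega=\la v_0,\nu\ra=\iota_{v_0}\nu$ for a unique $v_0\in V$; an $(e-1)$-dimensional subspace $U$ is then $\omega$-singular exactly when $\nu(v_0,u_1,\dots,u_{e-1},w)=0$ for all $w$, i.e.\ exactly when $v_0\in U$, and such a $U$ exists because $e-1\le\dim V$ (this step uses no hypothesis on $K$). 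For the inductive step with $e\ge4$ and $\dim V=n\ge e+1$, pick any nonzero $u\in V$; since $\iota_u\iota_u\omega=0$, the contraction $\iota_u\omega\in\Wedge^{e-1}V^*$ descends to an alternating $(e-1)$-form $\overline{\iota_u\omega}$ on $V/\la u\ra$, whose dimension $n-1\ge e=(e-1)+1$. The inductive hypothesis (valid since $e-1\ge3$) gives a singular $(e-2)$-dimensional subspace $\overline W\subseteq V/\la u\ra$; its preimage $W\subseteq V$ is $(e-1)$-dimensional and contains $u$, and choosing a basis $u,w_2,\dots,w_{e-1}$ of $W$ one sees that $\la\Wedge^{e-1}W,\omega\ra$ is spanned by the functional $w\mapsto\omega(u,w_2,\dots,w_{e-1},w)=\overline{\iota_u\omega}(\overline{w_2},\dots,\overline{w_{e-1}},\overline w)$, which vanishes because $\overline W$ is $\overline{\iota_u\omega}$-singular; hence $W$ is $\omega$-singular.

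For $e=3$ with $n:=\dim V\ge4$ even, no hypothesis on $K$ is needed: for any nonzero $u$ the alternating form $\iota_u\omega$ on $V$ has $u$ in its radical, so its rank is at most $n-1$, hence---being even---at most $n-2$, whence its radical is at least two-dimensional; picking $v$ in the radical outside $\la u\ra$ gives a singular line $\la u,v\ra$.

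The crucial case is $e=3$ with $n$ odd (so $n\ge5$). Fix a nonzero $\nu\in\Wedge^nV^*$. For $u\in V$, the form $\overline{\iota_u\omega}\in\Wedge^2(V/\la u\ra)^*$ lives on a space of even dimension $n-1$, so it has a Pfaffian $\mathrm{Pf}(\overline{\iota_u\omega})$, which is intrinsically an element of the one-dimensional space $\Wedge^{n-1}(V/\la u\ra)^*$; the latter is spanned by the descent of $\iota_u\nu$, so we may write
\[ \mathrm{Pf}(\overline{\iota_u\omega})=\phi(u)\cdot\iota_u\nu \quad\text{with }\phi(u)\in K. \]
Now $u\mapsto\mathrm{Pf}(\overline{\iota_u\omega})$ is homogeneous of degree $\frac{n-1}{2}$ in $u$ and $u\mapsto\iota_u\nu$ is linear and nonzero, and a coordinatewise divisibility check shows that $\phi$ is a homogeneous polynomial in $n$ variables of degree $\frac{n-1}{2}-1$. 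Since $\frac{n-1}{2}-1<n$ (and the case $\phi\equiv0$ is trivial), the defining property of $K$ yields a nonzero $u$ with $\phi(u)=0$; then $\mathrm{rank}(\iota_u\omega)<n-1$, hence---being even---$\le n-3$, so the radical of $\iota_u\omega$ has dimension at least $3$ and any $v$ in it outside $\la u\ra$ gives a singular line $\la u,v\ra$, completing the induction.

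I expect the main obstacle to be the construction of the equivariant map $\omega\mapsto\phi$ and the verification of its properties: well-definedness, genuine polynomiality of the stated degree, and---most delicately---correctness in characteristic $2$. For the last point one must use the integer-coefficient Pfaffian, which lives intrinsically in the top exterior power of the $2$-form, rather than the divided power $\frac1{r!}\beta^{\wedge r}$, which degenerates when $\mathrm{char}\,K$ divides $r!$; with this choice the equivalence $\mathrm{Pf}(\beta)=0\iff\mathrm{rank}(\beta)<\dim$ holds over every field, and the argument above goes through uniformly over all quasi-algebraically closed $K$.
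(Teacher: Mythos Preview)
Your proof is correct and follows essentially the same approach as the paper: the $e=3$ cases (even dimension via rank parity, odd dimension via the Pfaffian/divided-power polynomial of degree $(n-1)/2-1$) are identical, and your step-by-step induction $e\to e-1$ by contracting a single vector is just the paper's one-shot contraction by an $(e-3)$-dimensional subspace unrolled. Your separate treatment of $\dim V=e+1$ is correct but redundant, and your closing remark about using the integer-coefficient Pfaffian in positive characteristic is exactly what the paper formalises via divided powers in its Lemma on $\omega^{(k)}$.
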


The conclusion of the theorem holds in particular for finite fields,
which are quasi-algebraically closed by the Theorem of Chevalley and
Warning \cite{Chevalley36,Warning36}. Note that the statement is false
if $V$ has dimension $e$: an $e$-form spanning the one-dimensional
space $\Wedge^e V^*$ does not have singular $(e-1)$-spaces. Also, the
following construction shows that the statement is, in general, false
for trilinear forms over non-quasi-algebraically closed fields.

\begin{ex}
Consider a real Euclidean space $E$ of dimension $7$ and inner product
denoted by $\cdot$. It is known, see \cite{BrownGray67}, that there exist
vector cross products $a\times b\in V$ which are bilinear and which
satisfy the axioms
\begin{align}
a\times b\cdot a &  =0,\qquad a\times b\cdot b=0,\label{Axiom 1}\\
a\times b\cdot a\times b &  =(a\cdot a)(b\cdot b)-(a\cdot b)^{2}.\label{Axiom 2}%
\end{align}
It follows that $\omega(a,b,c):=a \times b\cdot c$
defines an alternating trilinear form on $E$, and (\ref{Axiom 2}) implies
that $a\times b\neq0$ for all linearly independent $a,b$. Hence there
are no $2$-dimensional $\omega$-singular subspaces.

Such exceptional alternating trilinear forms are of
great interest and are well-known, see for example \cite{Baez02}, to be related
to the composition algebra $\mathbb{O}$ of the real octonions. With respect to
an orthonormal basis $\{x_{1},\ldots,x_{7}\}$ of $E^*$ one such $\omega 
\Wedge^3 E^*$ is given by 
\begin{equation}
\omega:=f_{124}+f_{235}+f_{346}+f_{457}+f_{561}+f_{672}+f_{713},\label{t_Fano}%
\end{equation}
where $f_{ijk}=x_{i}\wedge x_{j}\wedge x_{k}.$ It is known, see \cite[Theorem
1]{Bryant87}, that the stabiliser $\lieg{GL}(E)_\omega$ of $\omega$ in $\lieg{GL}(E)$ is a subgroup of $\operatorname{SO}%
(E)\cong\operatorname{SO}(7)$ which is isomorphic to the compact exceptional
real Lie group $G_{2}$, and that $\lieg{GL}(E)_\omega$ acts transitively on the
set of $2$-dimensional vector subspaces of $E.$ Now, from (\ref{t_Fano}), the
linear form $\omega(e_{1},e_{2},.)$ is nonzero. Consequently, by the
afore-mentioned transitivity, for any $2$-dimensional space $\RR a
\oplus \RR b \subseteq E$ the form $\omega(a,b,.)$ is nonzero,
thus recovering the fact that $\omega$ has no singular lines. 
\end{ex}

This paper is organised as follows. In Section \ref{sec:Grassmann}
we collect some results on divided powers of alternating forms of
even degree, which we use in Section \ref{sec:Proof} to prove our main
theorem. It turns out that the proof is most interesting for trilinear
forms in odd dimensions $n$, where we prove that the singular lines either
sweep out the entire projective $(n-1)$-space or else a hypersurface of
degree $\frac{n-1}{2}-1$. Finally, in Section \ref{sec:TTSS} we study
$2$-singular subspaces for a trilinear form on a vector space $V$.
In particular, we give a sharp upper bound in terms of $\dim V$ on the
dimension of such subspaces (assuming that $\omega$ is non-degenerate),
and study the trilinear forms in certain binomial dimensions attaining
this bound.

\section*{Acknowledgments}
We thank Arjeh Cohen for useful discussions on the topic of this paper.

\section{Divided powers in the Grassmann algebra} \label{sec:Grassmann}

For an $n$-dimensional vector space $V$ over a field $K$ let $\Wedge
V=\bigoplus_{d=0}^n \Wedge^d V$ denote the Grassmann algebra of $V$. This
is an associative $K$-algebra in which the multiplication, denoted
$\wedge$, takes $\Wedge^d V \times \Wedge^e V$ into $\Wedge^{d+e}V$.
Let $e_1,\ldots,e_n$ be a basis of $V$, and for a $d$-element subset
$I=\{i_1<\ldots<i_d\}$ of $[n]$ write $e_I:=e_{i_1} \wedge \ldots \wedge
e_{i_d}$. These elements form a basis of $\Wedge^d V$. Now assume that
$d$ is even, and let $\omega \in \Wedge^d V$. For every natural number
$k$ we define an element $\omega^{(k)}$ of $\Wedge^{kd} V$ as follows.
Write $\omega=\sum_{I \subseteq [n], |I|=d} \alpha_I e_I$ and set
\begin{equation} \label{eq:DividedPower}
\omega^{(k)}:=\sum_{I \subseteq [n],\ |I|=kd}
\left(
	\sum_{\{I_1,\ldots,I_k\},\ \dot{\bigcup}_j I_j=I,\ 
	|I_j|=d} (\prod_j \alpha_{I_j}) 
	e_{I_1} \wedge \ldots \wedge e_{I_d} \right). 
\end{equation}
The second sum is over all unordered partitions of $I$ into $k$
$d$-element subsets. It is important that these partitions are taken
unordered, so that a permutation of the $I_j$ does not
yield further terms
in the second sum. Note that the expression being summed is well-defined
as interchanging two consecutive factors $e_{I_j}$s does not change the
sign of the wedge-product---here we use that $d$ is even.

\begin{lm} \label{lm:Power}
For even $d$ the map $\Wedge^d V \to \Wedge^{kd} V,\ \omega \mapsto
\omega^{(k)}$ has the following properties:

\begin{enumerate}

\item $\omega \wedge \omega \wedge \ldots \wedge \omega$, where the
number of factors is $k$, equals $(k!) \omega^{(k)}$;
\label{it:1}

\item the map $\omega \mapsto \omega^{(k)}$ does not depend on the choice
of the basis $e_1,\ldots,e_n$;  \label{it:2}

\item for any $K$-linear map $A:V \rightarrow W$ of vector spaces we
have $((\Wedge^d A) \omega)^{(k)}=(\Wedge^{kd}
A)(\omega^{(k)})$; and 
\label{it:3}

\item if $d=2$ and $\dim V=2k$, then $\omega^{(k)}$ is zero
if and only if $\omega$ does not have full rank.
\label{it:4}
\end{enumerate}
\end{lm}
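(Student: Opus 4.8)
The plan is to prove the four items in the order (1), (3), (2), (4); once (1) and (3) are in hand the remaining two follow with essentially no extra work. For (1), I would simply expand the ordinary $k$-fold wedge product $\omega^{\wedge k}=\sum(\prod_{j}\alpha_{I_j})\,e_{I_1}\wedge\cdots\wedge e_{I_k}$, where the sum now ranges over all \emph{ordered} $k$-tuples $(I_1,\ldots,I_k)$ of $d$-element subsets of $[n]$. A term vanishes unless the $I_j$ are pairwise disjoint, and for such a tuple the term depends only on the unordered partition $\{I_1,\ldots,I_k\}$ of $\bigcup_j I_j$: this is exactly where the hypothesis that $d$ is even is used, since transposing two adjacent blocks $e_{I_a}$, $e_{I_b}$ multiplies the wedge by $(-1)^{d^2}=(-1)^d=1$. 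Hence each partition occurring in \eqref{eq:DividedPower} arises from precisely $k!$ ordered tuples, all with the same coefficient and sign, giving $\omega^{\wedge k}=k!\,\omega^{(k)}$.

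For (3), over any field in which $k!$ is invertible (in particular over $\mathbb{Q}$) the statement is immediate from (1): the induced map $\Wedge A\colon\Wedge V\to\Wedge W$ is an algebra homomorphism, hence commutes with $k$-th powers, so $((\Wedge^{d}A)\omega)^{(k)}=\tfrac1{k!}(\Wedge^{kd}A)(\omega^{\wedge k})=(\Wedge^{kd}A)(\omega^{(k)})$. To get rid of the hypothesis on the characteristic I would use a standard specialisation argument: after fixing bases of $V$ and $W$, both sides of the desired identity are, written out in coordinates, polynomials with \emph{integer} coefficients in the matrix entries $A_{ji}$ of $A$ and the coefficients $\alpha_I$ of $\omega$ — the coefficients in \eqref{eq:DividedPower} lie in $\{0,1\}$, and the entries of the matrix of $\Wedge^{m}A$ are $m\times m$ minors of $A$. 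Two elements of $\ZZ[(A_{ji}),(\alpha_I)]$ that agree after the inclusion into $\mathbb{Q}[(A_{ji}),(\alpha_I)]$ are already equal, hence have the same image under any ring homomorphism to $K$; this proves (3) in general. Statement (2) is then the special case $W=V$ of (3): if $e'_i=g(e_i)$ for some $g\in\lieg{GL}(V)$, then evaluating the right-hand side of \eqref{eq:DividedPower} in the primed basis amounts to replacing the coefficient vector of $\omega$ by that of $(\Wedge^{d}g^{-1})\omega$ and then applying $\Wedge^{kd}g$ to the result, which by (3) returns $\omega^{(k)}$ unchanged.

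For (4), with $d=2$ and $\dim V=2k$, I would use (2) to compute $\omega^{(k)}\in\Wedge^{2k}V$ in a basis adapted to $\omega$. Choosing the well-known normal form $\omega=\sum_{i=1}^{r}e_{2i-1}\wedge e_{2i}$, with $2r$ the rank of $\omega$, the only $2k$-element subset of $[2k]$ is $[2k]$ itself, and in \eqref{eq:DividedPower} a nonzero term requires a partition of $[2k]$ into $k$ pairwise disjoint pairs each belonging to $\{\{1,2\},\{3,4\},\ldots,\{2r-1,2r\}\}$. This is possible exactly when $r=k$, in which case there is a unique such partition, contributing $\pm e_{[2k]}\neq 0$; when $r<k$ every term vanishes. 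Thus $\omega^{(k)}=0$ iff $\omega$ does not have full rank. (Equivalently, under the canonical identification $\Wedge^{2k}V\cong K$ the element $\omega^{(k)}$ is the Pfaffian of the Gram matrix of $\omega$, which vanishes precisely when $\omega$ is degenerate.)

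The one delicate point is the passage to arbitrary characteristic in (2) and (3): there the shortcut $\omega^{(k)}=\omega^{\wedge k}/k!$ is not available, and it is precisely the integrality of the coefficients in \eqref{eq:DividedPower} that lets the specialisation argument through. Everything else is routine bookkeeping with signs and partitions.
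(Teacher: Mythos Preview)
Your proof is correct and follows essentially the same strategy as the paper: expand the ordered product for (1), pass from characteristic zero to arbitrary $K$ by a specialisation argument over $\ZZ$, and reduce (4) to the standard normal form via basis independence. The only difference is that you run the specialisation directly on the equivariance statement (3) and then read off (2) as the case $A\in\lieg{GL}(V)$, whereas the paper specialises (2) first (working in $\ZZ[g_{ij},\det(g)^{-1}]$) and then deduces (3) by choosing bases adapted to $\ker A$; your order is arguably a little cleaner since it avoids inverting $\det g$.
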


\begin{proof}
Property \eqref{it:1} is obvious: multiplying by $k!$ has the same effect
as summing, in \eqref{eq:DividedPower}, over all {\em ordered} partitions.

Property \eqref{it:2} is clear in characteristic zero by property
\eqref{it:1}. Now if we express $e_1,\ldots,e_n$ by an invertible
matrix $g$ in a second basis $e_1',\ldots,e_n'$, then the fact
that $\omega^{(d)}$ does not change when $K$ has characteristic $0$
translates into identities among certain polynomial expressions over $\ZZ$
in $\det(g)^{-1}$ and the $g_{ij}$. These identities hold over any field,
which proves the basis-independence over any $K$.

The basis independence implies property \eqref{it:3}: choose a basis
$e_1,\ldots,e_m,\ldots,e_n$ of $V$ such that $e_{m+1},\ldots,e_n$ span
$\ker(A)$, and extend $Ae_1,\ldots,Ae_m$ to a basis of $W$. In these bases
it is trivial to verify that $((\Wedge^d A) \omega)^{(k)}=
(\Wedge^{kd} A) \omega^{(k)}$.

Property \eqref{it:4} also follows from basis independence.  Indeed,
one can choose a basis $e_1,\ldots,e_{2m},\ldots,e_{2k}$ of $V$ with $m
\leq k$ such that $\omega=\sum_{i=1}^m e_{2i-1} \wedge e_{2i}$. If $m<d$
then all terms in \eqref{eq:DividedPower} are zero. If $m=d$ then the
expression equals $e_1 \wedge \ldots \wedge e_{2d} \neq 0$.
\end{proof}

\begin{re}
\begin{enumerate}
\item We call $\omega^{(k)}$ the $k$-th {\em divided power} of
$\omega$.
\item If $d=2$ and $n=2k$, then the $k$-th divided power of $\omega$
is known as its {\em Pfaffian}.
\item In our application below, this lemma will be applied
to $V^*$.
\end{enumerate}
\end{re}

\section{Proof of the main theorem} \label{sec:Proof}

We first prove our main theorem for trilinear forms. Here we 
distinguish two cases, according to the parity of $\dim V$.

\begin{prop} \label{prop:TrilinearEven}
Let $V$ be a vector space of {\em even} dimension over any field and let
$\omega \in \Wedge^3 V^*$. Then every one-dimensional subspace of $V$
is contained in an $\omega$-singular two-dimensional subspace of $V$.
\end{prop}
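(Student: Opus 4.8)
The plan is to fix a nonzero vector $v \in V$ and reduce the problem to a statement about the radical of an alternating bilinear form. First I would record that a two-dimensional subspace $U = Kv + Kw$ is $\omega$-singular precisely when $\la v \wedge w, \omega \ra = 0 \in V^*$, i.e. $\omega(v,w,x) = 0$ for all $x \in V$: indeed $\Wedge^2 U$ is spanned by $v \wedge w$, so this single linear condition is exactly what is required. Thus the task is to produce some $w \in V \setminus Kv$ with $\omega(v,w,\cdot) = 0$.

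To this end, introduce the contraction $\omega_v := \la v, \omega \ra \in \Wedge^2 V^*$, the alternating bilinear form $(a,b) \mapsto \omega(v,a,b)$ on $V$. The crucial observation is that $v$ itself lies in the radical of $\omega_v$, since $\omega_v(v,b) = \omega(v,v,b) = 0$ by alternation; in particular the radical $R := \{ w \in V : \omega_v(w,\cdot) = 0\}$ is nonzero. Now I invoke the parity of the rank of an alternating bilinear form: $\operatorname{rk}(\omega_v)$ is even, hence $\dim R = \dim V - \operatorname{rk}(\omega_v)$ is even because $\dim V$ is even. A nonzero vector space of even dimension has dimension at least $2$, so there exists $w \in R$ with $w \notin Kv$. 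Then $\omega(v,w,x) = \omega_v(w,x) = 0$ for every $x \in V$, so $U := Kv + Kw$ is an $\omega$-singular two-dimensional subspace containing $Kv$, as desired.

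There is no serious obstacle here; the only things to be careful about are the bookkeeping in the first step, namely that the singularity of $U$ is genuinely equivalent to the one condition $\omega(v,w,\cdot) = 0$, and the standard fact that alternating bilinear forms have even rank. As an alternative to the latter one may apply Lemma~\ref{lm:Power}\eqref{it:4} to $\omega_v$ after restricting to an even-dimensional subspace: since $\omega_v$ is degenerate (it kills $v$), its top divided power vanishes, forcing $\operatorname{rk}(\omega_v) \le \dim V - 2$ and hence $\dim R \ge 2$. Either route gives the same conclusion, and the argument genuinely uses the hypothesis that $\dim V$ is even only through this parity bound on $\dim R$.
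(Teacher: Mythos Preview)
Your argument is correct and essentially identical to the paper's: contract $\omega$ with $v$, note that $v$ lies in the radical of $\omega_v$, and use parity of the rank of an alternating bilinear form (together with parity of $\dim V$) to force $\dim R \ge 2$. One quibble: the alternative you sketch via Lemma~\ref{lm:Power}\eqref{it:4} does not actually avoid the even-rank fact, since that lemma only yields $\operatorname{rk}\omega_v < \dim V$, and getting down to $\dim V - 2$ still requires the parity argument you were trying to replace.
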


\begin{proof}
For any one-dimensional subspace $\la u \ra$ of $V$ the alternating
bilinear form $\la u,\omega \ra \in \Wedge^2 (V^*)$ has rank at most
$\dim V-1$, as $u$ is in its radical. But the rank of an alternating
bilinear form is even, so the rank of $\la u,\omega \ra$ is at most
$\dim V-2$. Hence there exists a $u'$, linearly independent of $u$,
such that $\la u \wedge u',\omega \ra=0$.
\end{proof}

\begin{thm} \label{thm:TrilinearOdd}
Let $V$ be a vector space of {\em odd} dimension $n \geq 5$ over a field $K$
and let $\omega \in \Wedge^3 V^*$. Then the union of all $\omega$-singular
lines is either all of $V$ or a hypersurface defined by a homogeneous
polynomial in $K[V]$ of degree $(n-1)/2 - 1$.
\end{thm}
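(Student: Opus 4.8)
Set $k:=(n-1)/2$ and, for $u\in V$, write $\eta_u:=\langle u,\omega\rangle\in\Wedge^2 V^*$; since $u\wedge u=0$ we have $\langle u,\eta_u\rangle=0$, so $u$ lies in the radical of $\eta_u$. The plan is to express the condition ``$u$ lies on an $\omega$-singular line'' through the divided powers of $\eta_u$ and then to exploit a divisibility that drops the expected degree by one. Unlike the even-dimensional case of Proposition~\ref{prop:TrilinearEven}, here $\eta_u$ can have rank $n-1$; but one checks that $u\ne 0$ lies on an $\omega$-singular line if and only if $\eta_u$ has rank at most $n-3$, i.e. if and only if the radical of $\eta_u$ is more than one-dimensional (recall the rank is even and, thanks to $u$, at most $n-1$). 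A computation in a basis adapted to $u$, together with Lemma~\ref{lm:Power}\eqref{it:4} applied to a complement of $\langle u\rangle$, then identifies this with the vanishing of the divided power $\eta_u^{(k)}\in\Wedge^{n-1}V^*$. Hence the union of all $\omega$-singular lines is the zero locus of the map
\[ \psi\colon V\to\Wedge^{n-1}V^*,\qquad u\mapsto\eta_u^{(k)}, \]
which by the formula \eqref{eq:DividedPower} is polynomial and homogeneous of degree $k$.

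The heart of the argument is that $\psi(u)$ is always a scalar multiple of $\langle u,\Omega_0\rangle$ for a fixed nonzero $\Omega_0\in\Wedge^n V^*$. I would first establish the Leibniz-type identity $\langle v,\alpha^{(k)}\rangle=\langle v,\alpha\rangle\wedge\alpha^{(k-1)}$ for $\alpha$ of even degree; this is a combinatorial identity with integer coefficients in the description \eqref{eq:DividedPower} (and in characteristic zero it is immediate from Lemma~\ref{lm:Power}\eqref{it:1}, contraction being an antiderivation), so it holds over any $K$. Applying it with $\alpha=\eta_u$ and $v=u$, and using $\langle u,\eta_u\rangle=0$, gives $\langle u,\psi(u)\rangle=0$ identically in $u$. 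On the other hand, for $u\ne 0$ the kernel of $\langle u,-\rangle$ on $\Wedge^{n-1}V^*$ is exactly the line spanned by $\langle u,\Omega_0\rangle$ (immediate in a basis containing $u$), so $\psi(u)\in K\cdot\langle u,\Omega_0\rangle$. Writing $\psi(u)=\sum_i q_i(u)\langle e_i,\Omega_0\rangle$ with $q_i\in K[V]$ homogeneous of degree $k$ and unravelling $\langle u,\psi(u)\rangle=0$ in coordinates yields $q_iU_j=q_jU_i$ in $K[V]$ for all $i,j$, where $U_1,\ldots,U_n$ are the coordinate functions. As $K[V]$ is a polynomial ring---hence a UFD in which the $U_i$ are pairwise coprime primes---each $q_i$ is divisible by $U_i$ with a common quotient $p\in K[V]$, necessarily homogeneous of degree $k-1=\tfrac{n-1}{2}-1$; thus $\psi=p\cdot\langle-,\Omega_0\rangle$.

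Consequently a nonzero $u$ lies on an $\omega$-singular line if and only if $p(u)=0$. If $p$ is the zero polynomial this says the union of all $\omega$-singular lines is $V$; otherwise that union is the hypersurface cut out by $p$, a homogeneous polynomial of degree $\tfrac{n-1}{2}-1$, which is positive since $n\ge 5$ (so that $p$ vanishes at the origin as well, and the set-theoretic union really is the full zero locus of $p$). I expect the two ingredients of the middle paragraph to be the main work---proving the divided-power Leibniz rule over an arbitrary field and then extracting the common polynomial factor $p$---while the reduction in the first paragraph and the dichotomy at the end are routine. Finally, the same construction produces the $\lieg{GL}(V)$-equivariant map $\omega\mapsto p$ from $\Wedge^3 V^*$ to homogeneous polynomials of degree $\tfrac{n-1}{2}-1$ on $V$ promised in the abstract---its equivariance, up to the $\det$-twist coming from $\Wedge^n V^*$, following from Lemma~\ref{lm:Power}\eqref{it:3}---though equivariance plays no role in the proof of the theorem.
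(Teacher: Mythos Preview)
Your proof is correct and follows the same strategy as the paper: form the $k$-th divided power $\eta_u^{(k)}$, show it lies in a line in $\Wedge^{n-1}V^*$ that depends linearly on $u$, and extract the degree-$(k{-}1)$ polynomial factor. The paper reaches the key containment more directly by observing that $\eta_u\in\Wedge^2(u^0)$ (immediate from $\langle u,\eta_u\rangle=0$), whence $\eta_u^{(k)}\in\Wedge^{n-1}(u^0)$, which is already one-dimensional---so your Leibniz identity for divided powers, while correct, is not needed; on the other hand, your explicit UFD argument for extracting the common factor $p$ spells out a step the paper glosses over when it simply asserts that $f_\omega$ is a polynomial.
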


In particular, if $K$ is quasi-algebraically closed, then this
hypersurface contains $K$-rational points, since $(n-1)/2-1$ is greater
than zero and less than $n$, the number of variables.

\begin{proof}
For any non-zero $u \in V$ consider the alternating bilinear form
$\omega_u:=\la u,\omega \ra \in \Wedge^2 V^*$. This is an element of
$\Wedge^2(u^0) \subseteq \Wedge^2(V^*)$, where $u^0$ is the annihilator
of $u$ in $V^*$. Setting $k:=(n-1)/2$, the $k$-th divided power
$\omega_u^{(k)}$ of $\omega_u$ lies in the one-dimensional subspace
$\Wedge^{n-1}(u^0)$ of the $n$-dimensional space $\Wedge^{n-1} (V^*)$.
By choosing a basis in the one-dimensional space $\Wedge^n(V^*)$ the
space $\Wedge^{n-1}(V^*)$ can be identified with $(V^*)^*=V$. Under
this identification the one-dimensional subspace $\Wedge^{n-1}(u^0)$
corresponds to the one-dimensional subspace $Ku$, and hence
$\omega_u^{(k)}$ corresponds to a multiple $f_\omega(u)u$ of $u$.
Now $f_\omega(u)$ is either zero or a homogeneous polynomial in $u$
of degree $k - 1=(n-1)/2-1$, which as $n>3$ is strictly positive. Its
non-zero roots are precisely the vectors $u \neq 0$ for which $\omega_u$
does not have full rank, by property \eqref{it:4} in Lemma \ref{lm:Power}
applied to the even-dimensional space $u^0$.  These, in turn, are
precisely the vectors $u \neq 0$ for which there exists a $u' \in
V$, linearly independent of $u$, for which $\la u \wedge u',\omega
\ra=0$---that is, the vectors $u \neq 0$ lying in some two-dimensional
$\omega$-singular space.
\end{proof}

Now we can prove the main theorem in full generality.

\begin{proof}[Proof of the main theorem.]
Let $\omega$ be an alternating $e$-form on a space of dimension larger
than $e$, and assume that $e \geq 3$. We have to prove that there
exist $(e-1)$-dimensional $\omega$-singular spaces. Choose an
$(e-3)$-dimensional subspace $U$ of $V$, let $\lambda \in \Wedge^{e-3}
V$ span $\Wedge^{e-3} U$, and consider $\omega':=\la \lambda,\omega
\ra \in \Wedge^3 (V/U)^*$. By Proposition~\ref{prop:TrilinearEven}
and Theorem~\ref{thm:TrilinearOdd} the space $V/U$, which is of dimension greater
than $3$, contains an $\omega'$-singular two-dimensional space $V'$. The
pre-image of $V'$ in $V$ is an $(e-1)$-dimensional $\omega$-singular
space.
\end{proof}

\begin{re}
The following remarks all concern trilinear forms.
\begin{enumerate}
\item The map $\Wedge^3 V^* \to S^{(n-1)/2-1} V^*$ sending $\omega$
to $f_\omega$ is $\lieg{GL}(V)$-equivariant by construction. This map
may prove useful in the further study of alternating trilinear forms.

\item If $K$ is finite and $n$ is odd, the theorem of Chevalley and
Warning allows one to add another $(n+1)/2$ linear equations, which then
still have a non-zero common root with $f$. Hence every space of vector
dimension $(n-1)/2$ intersects some singular line.

\item Suppose that $K$ is algebraically closed. Then every line
intersects some singular lines. If $f$ is non-zero, then a general line
has $(n-1)/2-1$ intersections with singular lines.

\item From the classification in \cite{Cohen88} one can deduce that for
trilinear forms on spaces of dimensions $5$ and $7$ the polynomial $f$ is
identically zero if and only if $\omega$ has a singular one-dimensional
space, that is, if and only if $\omega \in \wedge^3 U^*$ for some
proper subspace $U^*$ of $V^*$. The implication $\Rightarrow$ clearly
always holds, but the converse does not. Indeed, consider
the form 
\[ \omega=x_1 \wedge x_2 \wedge x_3
+ x_4 \wedge x_5 \wedge x_6
+ x_7 \wedge x_8 \wedge x_9, \]
where $x_1,\ldots,x_9$ are a basis of a $9$-dimensional space $V^*$. For
general $v$ the radical of $\omega_v$ is three-dimensional, hence
$f_\omega$ is identically zero, but $\omega$ does not have a singular
point.

\item In the previous example $\omega$ equals $\omega_1+\omega_2+\omega_3$
for a suitable decomposition $V=V_1 \oplus V_2 \oplus V_3$
and $\omega_i
\in V_i^*=(V_j \oplus V_k)^0$ for all distinct $i,j,k$. One may be led to
think that $f_\omega$ is identically zero if and only if $\omega$ is the
sum of forms $\omega_i$, where each $\omega_i \in V_i^*=(\bigoplus_{j \neq
i} V_j)^*$ for some non-trivial vector space decomposition $V=\bigoplus_i
V_i$. This is, however, not true: take $V$ equal to a simple Lie algebra
of odd dimension $n$ and rank $l$, say in characteristic zero. For
instance, one may take $V=\liea{sl}_m$ with $m$ even, so that $n=m^2-1$
and $l=m-1$ are odd. Let $\omega$ be the trilinear form on $V$ defined
by $\omega(u,v,w)=\kappa([u,v],w)$, where $[.,.]$ is the Lie bracket and
$\kappa$ is the Killing form. This form is alternating as the Killing form
is invariant ($\kappa([u,v],w)+\kappa(v,[u,w])=0$) and the Lie bracket is
alternating. Now for all $u$ the space of elements $v$
having zero Lie bracket with $u$
has dimension at least $l$. Hence if $l>1$, then the alternating bilinear
form $\omega_u$ has a radical. We conclude that $f_\omega=0$. On the other
hand, $\omega$ cannot be split as a sum of $\omega_i$s as above. Indeed,
$\omega$ does not have singular one-dimensional spaces, as $\kappa$
is a non-degenerate symmetric bilinear form and the centre of $V$ is
trivial. Hence $\omega$ is non-degenerate in the sense of \cite{Hora04},
and by the results of that paper the finest decomposition of $V$ and of
$\omega$ as above would be unique. Then, since $\omega$ is $V$-invariant,
the $V_i$ would have to be ideals in $V$, which would contradict the
fact that $V$ is simple. Concluding, at present we have no better
geometric description for $f_\omega \equiv 0$ than ``the
union of all singular lines is $\PP V$''.
\end{enumerate}
\end{re}

\section{Two-singular subspaces for alternating trilinear forms} 
\label{sec:TTSS}

Recall that a subspace $U$ of a vector space $V$ is called $2$-singular
for an alternating trilinear form $\omega$ if all $2$-dimensional
subspaces of $U$ are $\omega$-singular; in particular, we consider to be
$2$-singular all subspaces of dimension at most one, as well as all
$\omega$-singular $2$-dimensional subspaces. Here we present a result on
the possible dimensions of such a space $U$. The kernel of $V \mapsto
\Wedge^2 V^*,\ v \mapsto \la v,\omega \ra$ is called the {\em radical}
of $\omega$; and $\omega$ is called {\em non-degenerate} if its radical
is trivial.

\begin{thm} \label{thm:ns2}
Assume that $\dim V \geq 3$ and let $s\geq 2$ be the natural number
for which $\binom{s}{2} < n:=\dim V \leq \binom{s+1}{2}$. Then no
non-degenerate trilinear form on $V$ can have a $2$-singular space of
codimension strictly smaller than $s$; but there exist non-degenerate
trilinear forms on $V$ having $2$-singular spaces of codimension exactly
$s$. Moreover, if $n=\binom{s+1}{2}$, then the non-degenerate trilinear
forms having a $2$-singular space of codimension $s$ form a single
$\lieg{GL}(V)$-orbit.
\end{thm}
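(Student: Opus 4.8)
The plan is to treat the three assertions separately, in increasing order of difficulty. For the lower bound on codimension, suppose $U$ is a $2$-singular subspace of codimension $c < s$, so $\dim U = n - c > n - s \geq \binom{s}{2} - (s-1) = \binom{s-1}{2}$ — actually the relevant inequality is $\dim U \ge n-s+1 > \binom{s}{2}-s+1$; I should instead argue directly. Since $U$ is $2$-singular, $\la u \wedge u', \omega \ra = 0$ for all $u, u' \in U$, which means that for every $u \in U$ the linear form $\la u, \omega \ra$ annihilates all of $U$; hence $\la u, \omega \ra \in \Wedge^2 (U^0 \oplus \text{(something)})$. More precisely, the map $U \to \Wedge^2 (V/U)^* \subseteq \Wedge^2 V^*$, $u \mapsto \la u, \omega \ra$, is a linear injection when $\omega$ is non-degenerate and $U$ is $2$-singular (injectivity: if $\la u, \omega \ra = 0$ then $u$ is in the radical). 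Wait — I need $\la u, \omega \ra$ to factor through $V/U$: since $U$ is $2$-singular, $\la u, \omega \ra$ kills $U$, so indeed $\la u, \omega\ra \in \Wedge^2 (V/U)^*$. Therefore $\dim U \le \binom{c}{2}$, i.e.\ $n - c \le \binom{c}{2}$, i.e.\ $n \le \binom{c}{2} + c = \binom{c+1}{2}$. By minimality of $s$ with $n \le \binom{s+1}{2}$ this forces $c \ge s$. This handles the first claim cleanly; I expect this to be the easy part.

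For existence of a non-degenerate $\omega$ with a $2$-singular space of codimension exactly $s$, I would reverse the construction. Write $V = U \oplus W$ with $\dim W = s$ and $\dim U = n - s \le \binom{s}{2}$. Choose an injective linear map $\phi: U \hookrightarrow \Wedge^2 W^*$ (possible since $\dim U \le \binom{s}{2} = \dim \Wedge^2 W^*$), and define $\omega \in \Wedge^3 V^*$ by declaring $\la u, \omega \ra = \phi(u) \in \Wedge^2 W^* \subseteq \Wedge^2 V^*$ for $u \in U$ and $\la w, \omega \ra$ appropriately for $w \in W$ — in fact $\omega$ is determined: it is the unique alternating $3$-form with $\omega(u, w, w') = \phi(u)(w, w')$ and $\omega$ vanishing on $\Wedge^3 W$ and on $U \wedge U \wedge V$. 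One checks $\omega$ is alternating and that $U$ is $2$-singular by construction (all $2$-subspaces of $U$ pair to zero since $\phi(u)$ lands in $\Wedge^2 W^*$ and $\la u \wedge u', \omega\ra$ involves $\phi(u)$ evaluated partly on $u' \in U$, which the construction kills). For non-degeneracy: the radical consists of $v = u + w$ with $\la v, \omega \ra = 0$; the $U$-part forces $\phi(u) + (\text{terms from } w) = 0$, and a dimension count / careful choice of $\phi$ (e.g.\ taking $\phi$ so that its image together with the forms coming from $W$ spans enough) makes the radical trivial. I would make $\phi$ explicit on a basis to verify this.

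The main obstacle is the last assertion: when $n = \binom{s+1}{2}$ the non-degenerate forms with a $2$-singular space of codimension $s$ form a single $\lieg{GL}(V)$-orbit. Here $\dim U = \binom{s}{2} = \dim \Wedge^2 W^*$ exactly, so the map $\phi: U \to \Wedge^2 W^*$ above must be a linear \emph{isomorphism}. The strategy is a normal-form argument: given any non-degenerate $\omega$ with $2$-singular $U$ of codimension $s$, I first argue that $U$ is uniquely determined (or determined up to the relevant symmetry) — it is the largest $2$-singular subspace, and by the codimension bound any $2$-singular subspace of codimension $s$ has dimension exactly $\binom{s}{2}$, which is forced to be maximal; I'd need to show two such $U$'s cannot coexist, or handle it by noting $\lieg{GL}(V)$ acts transitively on the flags $(U \subseteq V)$ of the right dimensions, reducing to a fixed $U$ with fixed complement $W$. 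Then $\omega$ corresponds to the isomorphism $\phi: U \xrightarrow{\sim} \Wedge^2 W^*$ together with possibly a component in $\Wedge^3 W^*$; I'd show the $\Wedge^3 W^*$-component can be removed by an element of $\lieg{GL}(V)$ fixing $U$ and $W$ (a shear $w \mapsto w + (\text{something in } U)$ adjusts it), and that non-degeneracy forces the $U \wedge U \wedge W$ and $\Wedge^3 U$ components to vanish. Finally, two isomorphisms $\phi_1, \phi_2: U \xrightarrow{\sim} \Wedge^2 W^*$ are intertwined by the pair $(g, h) \in \lieg{GL}(U) \times \lieg{GL}(W)$ where $g = \phi_2^{-1} \circ \Wedge^2(h^*)^{-1} \circ \phi_1$ for \emph{any} $h \in \lieg{GL}(W)$; we just take $h = \mathrm{id}$ and $g = \phi_2^{-1}\phi_1$, which visibly carries $\omega_1$ to $\omega_2$. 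The delicate point I expect to fight with is showing the extra components ($\Wedge^3 W^*$, and cross terms) either vanish or are removable, and confirming non-degeneracy is automatic once $\phi$ is an isomorphism — this is where a clean invariant characterization of $U$ (as, say, the radical of some naturally associated form, or the unique maximal $2$-singular subspace) would do the most work, so I would look for such a characterization first.
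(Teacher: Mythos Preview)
Your outline matches the paper's proof essentially step for step: the injectivity of $u \mapsto \langle u,\omega\rangle \in \Wedge^2(V/U)^*$ for the bound, an injective $\phi:U\hookrightarrow \Wedge^2 W^*$ for existence, and the upper-triangular reduction (first $\lieg{GL}(U)$ to match the isomorphisms, then a shear $B\in\Hom(W,U)$ to kill the $\Wedge^3 W^*$ component) for the single-orbit claim.

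Two small corrections. First, the vanishing of the $U\wedge U\wedge W$ and $\Wedge^3 U$ components of $\omega'$ is forced by the $2$-singularity of $U$ (since $\langle u\wedge u',\omega'\rangle=0$ for all $u,u'\in U$), not by non-degeneracy. Second, in the existence step not every injective $\phi$ yields a non-degenerate $\omega$: a nonzero $w\in W$ lying in the radical of every $\phi(u)$ would be in the radical of $\omega$. The paper's condition is precisely $\bigcap_{u\in U}\operatorname{rad}\phi(u)=\{0\}$, and one checks this is achievable because $s\ge 2$: for $s$ even put a non-degenerate $2$-form in $\operatorname{im}\phi$, and for $s$ odd (so $s\ge 3$ and $\dim U\ge 3$) put two rank-$(s-1)$ forms with distinct one-dimensional radicals in $\operatorname{im}\phi$. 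Also, your shear does not fix $W$, only $U$; and you do not need uniqueness of $U$ for the orbit argument---$\lieg{GL}(V)$-transitivity on codimension-$s$ subspaces already lets you move the given $2$-singular subspace to the reference $U$.
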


Note that if $V$ is three-dimensional this theorem reduces to the known
fact that there exist non-degenerate trilinear forms on $V$, and that
these form a single orbit. For the next interesting case $n=\binom{4}{2}=6$
see Example~\ref{ex:n6} below.

\begin{proof}
Suppose that $U$ is a $2$-singular subspace for the non-degenerate
trilinear form $\omega$ on $V$. Then we have a linear map $U
\to \Wedge^{2}(V/U)^*,\ u \mapsto \la u,\omega \ra$, whose kernel is
contained in the radical of $\omega$, hence zero by assumption. Hence we
find that $r:=\dim U \leq \binom{n-r}{2}=\dim \Wedge^2(V/U)^*$, or $s'
\geq n-\binom{s'}{2}$ where $s':=n-r$, or $\binom{s'+1}{2} \geq n$,
so that the codimension $s'$ of $U$ is at least $s$, as claimed.

Now let $U$ be a subspace of $V$ of codimension $s$. For the remainder
of this proof it is convenient to choose a vector space complement $W$
of $U$ in $V$. We may then identify $W^*$ with the annihilator of $U$
in $V^*$, and vice versa.  Since $\dim U\leq\dim \Wedge^2 W^*$, there
exist injective linear maps $L:U\to \Wedge^2 W^*$.  In fact we may chose
such an injection $L$ to have the property that the intersection of
the radicals of all images $L(u)$ is trivial. For if $s$
is even, then we may take $L$ such that some $L(u)$ is a non-degenerate
alternating $2$-form, while if $s$ is odd, then $s,\binom{s}{2} \geq 3$ by the
dimension restriction on $V$ and we can ensure that $\im(L)$ contains
two alternating forms of rank $s-1$ whose radicals are distinct.

We also view $L$ as an element of $U^* \otimes \Wedge^2 W^*$ and hence as
an element $\omega=\omega_L$ of $\Wedge^3 V^*$ by means of the (injective)
linear map $U^* \otimes \Wedge^2 W^* \to \Wedge^3 V^*$ determined by
$\xi \otimes \zeta \mapsto \xi \wedge \zeta$.  Then $\omega$ has $U$ as a
$2$-singular subspace, and we claim that $\omega$ is non-degenerate. For
this we have to prove that the linear map $H: V \to \Wedge^2 V^*,\ v
\mapsto \la v,\omega \ra$ is injective. This $H$ maps $U$ into $\Wedge^2
W^*$ and $W$ into $U^* \otimes W^*$, considered as a subspace of $\Wedge^2
V^*$ by the injective linear map determined by $\xi \otimes \zeta \mapsto \xi
\wedge \zeta$. Since the two subspaces $\Wedge^2 W^*$ and $U^* \otimes
W^*$ of $\Wedge^2 V^*$ intersect trivially, the injectivity of $H$ is
equivalent to the joint injectivity of $H|_U$ and of $H|_W$. Now $H|_U=L$
is injective by assumption, and $H(w)=0$ implies that $w$ lies in the
radical of $L(u)$ for all $u \in U$, a contradiction to the choice of
$L$. This proves that $\omega$ is non-degenerate.

Finally suppose that $n=\binom{s+1}{2},$ so that $\dim
U=\binom{s}{2}$. Then we need to show that all non-degenerate trilinear
forms $\omega'$ on $V$ having a $2$-singular subspace of codimension $s$
are in the $\lieg{GL}(V)$-orbit of the form $\omega$ constructed above.
First we move a $2$-singular codimension-$s$ subspace for $\omega'$
to $U$ by an element of $\lieg{GL}(V)$. Then $\omega'$ determines a
linear isomorphism $L':U \to \Wedge^2 W^*$, and we still have the group
of upper triangular linear maps
\[ g=\begin{bmatrix} A & B \\ 0 & C \end{bmatrix} \in \lieg{GL}(V) 
=\lieg{GL}(U \oplus W)
\]
with $A \in \lieg{GL}(U)$, $B \in \Hom(W,U)$, and $C \in \lieg{GL}(W)$ to
move $\omega'$ to $\omega$. First we take $B=0$ and $C=I$ and observe that
acting with $g$ on $\omega'$ corresponds to replacing $L'$ by $L' \circ
A^{-1}$. Hence by taking $A=L^{-1}L'$ we move $\omega'$ such that $L'$
becomes equal to $L$.

Now $\omega,\omega' \in (U^* \otimes \Wedge^2 W^*) \oplus \Wedge^3 W^*$
have the same component $L$ in the first summand, but $\omega'$ may have a
non-zero component $\mu'$ in the second summand while $\omega$ does not.
Take $A=C=I$ in the element $g$ and verify that $g$ then acts trivially on
$U$ and on $W^*$, while it sends an element $\xi$ of $U^*$ to $\xi-\xi
\circ B \in U^* \oplus W^*=V^*$. Hence $g$ fixes $\mu'
\in \Wedge^3 W^*$ and maps $L$ to $L-L\circ B$, with the slight
abuse of notation that the latter expression stands for the {\em image} of $L
\circ B$ under the projection $W^* \otimes \Wedge^2 W^* \to \Wedge^3
W^*$. By surjectivity of $L$ we may choose $B$ such that this image
coincides with $\mu'$, so that $g$ maps $\omega'$ to $\omega$.
This completes the proof that $\omega'$ lies in the orbit of $\omega$.
\end{proof}

We conclude by determining the singular lines of $\omega$ in the orbit
described above. We think of $U$ as equal to $\Wedge^2 W^*$, and then
the alternating trilinear form $\omega$ is determined by
\begin{align*} 
&\omega(\mu_1,\mu_2,.)=0 \text{ for $\mu_1,\mu_2 \in \Wedge^2 W^*$,}\\
&\omega(\mu,w_1,w_2)=\mu(w_1,w_2) \text{ for $\mu \in \Wedge^2 W^*,\
w_1,w_2 \in W$, and}\\
&\omega(w_1,w_2,w_3)=0 \text{ for $w_1,w_2,w_3 \in W$.}
\end{align*}
In addition to the
$2$-dimensional subspaces of $U=\Wedge^2 W^*$ also the $2$-dimensional
subspaces of the form $K \mu_1 \oplus K(\mu_2+w_2)$ with $\mu_1,\mu_2
\in \Wedge^2 W^*$ and $w_2$ in the radical of $\mu_1$ are singular. We
claim that these are the only singular lines. Indeed, consider a
$2$-dimensional subspace of the form $K(\mu_1+w_1) \oplus K(\mu_2+w_2)$
with $w_1,w_2$ linearly independent. Then choose any alternating
bilinear form $\mu_3$ on $W$ such that $\mu_3(w_1,w_2) \neq 0$.  Then we
have $\omega(\mu_1+w_1,\mu_2+w_2,\mu_3)=\mu_3(w_1,w_2)\neq 0$, so the
line is non-singular. This argument also implies that $U$ is the only
codimension-$s$ subspace that is $2$-singular: any other subspace $U'$
with this property cannot have a projection along $U$ onto $W$ that
is more than $1$-dimensional, and hence $U'$ must intersect $U$ in a
codimension-$1$ subspace. But if $\mu+w \in U'$ with $w \neq 0$, then
the elements of $U \cap U'$ must all have $w$ in their radicals. The
space of alternating bilinear forms on $W$ having $w$ in their radicals
is $\Wedge^2 (W/Kw)^*$ and has dimension $\binom{s-1}{2}$.  Hence this
space cannot contain a codimension-$1$ subspace of $\Wedge^2 W^*$.

\begin{ex} \label{ex:n6}
In the last part of Theorem \ref{thm:ns2} the smallest
dimension of interest is $n=6,$ a representative of the single
$\operatorname*{GL}(V)$-orbit being the form
\[
\omega=x_{2}\wedge x_{3}\wedge x_{4}+x_{1}\wedge x_{3}\wedge x_{5}+x_{1}\wedge x_{2}\wedge x_{6},
\]
for which the 3-dimensional subspace $U:=\prec e_{4},e_{5},e_{6}\succ$
is the unique 2-singular subspace of codimension $s=3.$ In this
example the map $L:U \to \Wedge^2 W^*$ in the preceding proof is
chosen to be that which sends $e_4,e_5,e_6$ to 
$x_2 \wedge x_3,x_1 \wedge x_3,x_1 \wedge x_2$, 
respectively.  As pointed out in \cite[Section 3]{Shaw08}, in the
case $K=\operatorname*{GF}(2)$ a trilinear form belonging to the
same orbit as $\omega$ arises from the cubic equation of the 35-set
$\psi\subset\operatorname*{PG}(5,2)$ supporting a non-maximal partial
spread $\Sigma_{5}$ of five planes in $\operatorname*{PG}(5,2).$
The unique projective plane $U$ singled out as being 2-singular for
$\omega$ is in fact one of the planes of $\Sigma_{5},$ and can also be
picked out geometrically by the property that each of the seven planes
$\notin\Sigma_{5}$ which lie in $\psi$ meets $U$ in a line and meets
each of the four other planes $\in\Sigma_{5}$ in a point.
\end{ex}

%\bibliographystyle{plain}
%\bibliography{diffeq}

\end{document}